\newcounter{alphthm}
\theoremstyle{plain}
\newtheorem{thm}{Theorem}[section]
 \newtheorem{exam}[thm]{Example}
 \theoremstyle{definition}
 \newtheorem{defn}[thm]{Definition}
 \theoremstyle{remark}
 \newtheorem{rem}[thm]{\bf Remark}
 \numberwithin{equation}{subsection}
\begin{document}

\title {Fixed-Point Theorem For Mappings Satisfying
a General Contractive Condition Of Integral Type Depended an
Another Function \footnote{2000 {\it Mathematics Subject
Classification}:
 Primary 46J10, 46J15, 47H10.} }

\author{ S. Moradi\\\\
Faculty  of Science, Department of Mathematics\\
Arak University, Arak,  Iran\\
\date{}
 }
 \maketitle
\begin{abstract}
We established a fixed-point theorem for mapping satisfying a
general contractive inequality of integral type depended an
another function. This theorem substantially extend the theorem
due to Branciari (2003) and Rhoades (2003).
\end{abstract}

\textbf{Keywords:} Fixed point, contractive mapping, sequently
convergent, subsequently convergent, integral type.

\section{Introduction}

In 2002 [2], Branciari established the Banach Contractive
Principle in the following theorem.

\begin{thm}
 Let $(X,d)$ be a complete metric space, $ k \in [0,1)$ and
$S:X \longrightarrow X$ be a mapping such that, for each $x,y \in
X$,
\[\int_0^{d(Sx,Sy)} \phi(t)  dt \leq k{\int _0^{d(x,y)}
\phi(t)  dt},  \qquad\qquad\qquad (1)\]

where $\phi: [0,+\infty)\longrightarrow [0,+\infty)$ is a
Lebesgue-integrable mapping which is summable (i.e., with finite
integral) on each compact subset of $[0,+\infty)$, nonnegative,
and such that for each $\epsilon > 0 ,\int _0^\epsilon \phi(t) dt
> 0$; then $S$ has a unique fixed point $b\in X$ such that for
each $x\in X$, $\underset {n\rightarrow\infty} \lim S^{n} x=b$.
\end{thm}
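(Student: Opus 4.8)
The plan is to translate the integral hypothesis into a statement about the auxiliary function $\Phi(s) := \int_0^s \phi(t)\,dt$ and then run the classical Picard-iteration argument for contractions. First I would record three properties of $\Phi$: it is nondecreasing (since $\phi \geq 0$); it is continuous on $[0,+\infty)$ (since $\phi$ is summable on compact sets, $\Phi$ is an absolutely continuous integral); and $\Phi(0)=0$ while $\Phi(s)>0$ for every $s>0$ (this is precisely the hypothesis $\int_0^\epsilon \phi > 0$). In these terms condition (1) reads $\Phi(d(Sx,Sy)) \leq k\,\Phi(d(x,y))$. The elementary fact I would isolate and reuse is: if nonnegative reals $s_n$ satisfy $\Phi(s_n)\to 0$, then $s_n\to 0$ (otherwise some subsequence stays $\geq \epsilon$, forcing $\Phi(s_n)\geq \Phi(\epsilon)>0$ by monotonicity).

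Next I would fix $x_0 \in X$, set $x_n := S^n x_0$ and $a_n := d(x_n, x_{n+1})$. Applying the contraction repeatedly gives $\Phi(a_n) \leq k\,\Phi(a_{n-1}) \leq \cdots \leq k^n \Phi(a_0) \to 0$, so $a_n \to 0$ by the fact just noted.

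The central and hardest step is to show $\{x_n\}$ is Cauchy; since $\Phi$ need not be subadditive, the triangle inequality does not interact directly with (1), and one must argue by contradiction. Suppose $\{x_n\}$ is not Cauchy: there is $\epsilon > 0$ and, for each $j$, indices $m(j) > n(j) \geq j$ with $d(x_{m(j)}, x_{n(j)}) \geq \epsilon$, where $m(j)$ is chosen minimal so that $d(x_{m(j)-1}, x_{n(j)}) < \epsilon$. Using the triangle inequality together with $a_n \to 0$ I would show both $d(x_{m(j)}, x_{n(j)}) \to \epsilon$ and $d(x_{m(j)+1}, x_{n(j)+1}) \to \epsilon$. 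Feeding these into $\Phi(d(x_{m(j)+1}, x_{n(j)+1})) \leq k\,\Phi(d(x_{m(j)}, x_{n(j)}))$ and letting $j \to \infty$, continuity of $\Phi$ yields $\Phi(\epsilon) \leq k\,\Phi(\epsilon)$ with $\Phi(\epsilon) > 0$ and $k<1$ --- a contradiction. Hence $\{x_n\}$ is Cauchy and, by completeness, converges to some $b \in X$.

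Finally I would verify $b$ is the unique fixed point. From $\Phi(d(Sb, x_{n+1})) = \Phi(d(Sb, Sx_n)) \leq k\,\Phi(d(b, x_n)) \to 0$ I obtain $d(Sb, x_{n+1}) \to 0$, and since $x_{n+1} \to b$ the triangle inequality gives $d(Sb,b)=0$, i.e. $Sb=b$. If $c$ were a second fixed point with $c \neq b$, then $\Phi(d(b,c)) = \Phi(d(Sb,Sc)) \leq k\,\Phi(d(b,c))$ with $\Phi(d(b,c))>0$, again contradicting $k<1$; so the fixed point is unique, and since the iterates from an arbitrary starting point converge to a fixed point, $\lim_{n\to\infty} S^n x = b$ for every $x$. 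The main obstacle is the Cauchy step, specifically the bookkeeping to extract the two index sequences and to prove that both diagonal distances converge to $\epsilon$.
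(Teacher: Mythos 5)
Your proof is correct, but it takes a genuinely different route from the paper. Note first that the paper never proves Theorem 1.1 directly (it is quoted from Branciari); the paper's own argument is the proof of Theorem 2.1, which contains Theorem 1.1 as the special case $T=\mathrm{id}$, since $d(x,y)\le m'(x,y)$ makes condition (1) imply condition (4). That proof is organized as: Step 1, $d(x_n,x_{n+1})\to 0$ (same as your second paragraph, phrased with integrals rather than $\Phi$); Step 2, boundedness of $\{x_n\}$, proved by contradiction using indices $n(k)$ chosen minimally so that $d(x_{n(k+1)},x_{n(k)})>1$; Step 3, Cauchyness, obtained by iterating the contractive inequality along a chain of index pairs $(r(i),s(i))$ and using the bound from Step 2 to force $\int_0^{d(x_m,x_n)}\phi(t)\,dt\le k^n\cdot C\to 0$; Step 4, extraction of the fixed point. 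Your argument collapses Steps 2 and 3 into one: the minimal-index contradiction device, which the paper deploys only to prove boundedness (with threshold $1$), is aimed by you with threshold $\epsilon$ directly at the negation of Cauchyness, so boundedness is never needed at all. This buys a shorter proof and avoids the index-chain bookkeeping of the paper's Step 3, which is the most delicate (and most loosely written) part of the paper's argument. What the paper's two-step structure buys is robustness: it is the form of the argument that carries over to the general condition (4)--(5), where the right-hand side involves the max $m'$ and the auxiliary map $T$, which is the actual goal of the paper. A further difference of presentation: you isolate $\Phi(s)=\int_0^s\phi(t)\,dt$ and record its monotonicity, continuity, and positivity up front, which makes the limit passages (such as concluding $\Phi(\epsilon)\le k\,\Phi(\epsilon)$, or deducing $s_n\to 0$ from $\Phi(s_n)\to 0$) fully rigorous; the paper performs the same passages implicitly inside the integrals, for instance in the transition from (17)--(19) to (20).
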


After this result in (2003), Rhoades established the Branciari
Theorem in the following.

\begin{thm}
 Let $(X,d)$ be a complete metric space, $ k \in [0,1)$ and
$S:X \longrightarrow X$ a mapping such that, for each $x,y \in X$,
\[\int_0^{d(Sx,Sy)} \phi(t)  dt \leq k{\int _0^{m(x,y)}
\phi(t)  dt},  \qquad\qquad\qquad (2)\]
where
\[m(x,y)=\max \{ d(x,y),d(x,Sx),d(y,Sy),\frac{d(x,Sy)+d(y,Sx)}{2} \} \qquad\qquad (3)\]
and $\phi: [0,+\infty)\longrightarrow [0,+\infty)$ is a
Lebesgue-integrable mapping which is summable (i.e., with finite
integral) on each compact subset of $[0,+\infty)$, nonnegative,
and such that for each $\epsilon > 0 ,\int _0^\epsilon \phi(t) dt
> 0$. Then $S$ has a unique fixed point $b\in X$ such that for
each $x\in X$, $\underset {n\rightarrow\infty} \lim S^{n} x=b$.
\end{thm}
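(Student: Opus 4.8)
\bigskip

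\noindent\textit{Proof proposal.} The plan is to run the Picard iteration and follow the classical Branciari--Rhoades strategy, with the integral functional $\Phi(s):=\int_0^s \phi(t)\,dt$ as the principal tool. First I would record the properties of $\Phi$ that drive the whole argument: since $\phi$ is nonnegative and summable on compact sets, $\Phi$ is nondecreasing and continuous with $\Phi(0)=0$, and the hypothesis $\int_0^\epsilon \phi(t)\,dt>0$ for every $\epsilon>0$ gives $\Phi(s)>0$ whenever $s>0$. In particular, for $a>0$ the inequality $\Phi(a)\le k\,\Phi(a)$ is impossible when $k<1$; this single mechanism will produce every contradiction below. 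Fixing $x_0\in X$, I set $x_{n+1}=Sx_n$ and write $d_n:=d(x_n,x_{n+1})$.

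Second, I would show $d_n\to 0$. Applying (2) with $x=x_{n-1}$, $y=x_n$ gives $\Phi(d_n)\le k\,\Phi(m(x_{n-1},x_n))$. A direct computation of (3) yields entries $d_{n-1},d_{n-1},d_n$ together with an averaged term, and the triangle inequality bounds that averaged term by $\max\{d_{n-1},d_n\}$, so $m(x_{n-1},x_n)=\max\{d_{n-1},d_n\}$. If this maximum were $d_n$ with $d_n>0$, we would get $\Phi(d_n)\le k\,\Phi(d_n)$, a contradiction; hence $m(x_{n-1},x_n)=d_{n-1}$ and $\Phi(d_n)\le k\,\Phi(d_{n-1})\le\cdots\le k^n\Phi(d_0)\to 0$. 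As $\{d_n\}$ is nonincreasing it converges to some $r\ge 0$, and continuity of $\Phi$ with $\Phi(d_n)\to 0$ forces $\Phi(r)=0$, hence $r=0$; that is, $d_n\to 0$.

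Third---and this is where the real work lies---I would prove that $\{x_n\}$ is Cauchy. The main obstacle is that the contractive inequality controls $d(Sx,Sy)$ only through $m(x,y)$ and does not directly bound $d(x_n,x_m)$ for distant indices. Following the standard device, I argue by contradiction: if $\{x_n\}$ is not Cauchy there are $\epsilon>0$ and indices $m_i>n_i\ge i$ with $d(x_{n_i},x_{m_i})\ge\epsilon$, where $m_i$ is chosen minimal so that $d(x_{n_i},x_{m_i-1})<\epsilon$. Using $d_n\to 0$ and the triangle inequality repeatedly, I would show that $d(x_{n_i},x_{m_i})$, $d(x_{n_i+1},x_{m_i+1})$, $d(x_{n_i},x_{m_i+1})$, and $d(x_{m_i},x_{n_i+1})$ all converge to $\epsilon$, so that $m(x_{n_i},x_{m_i})\to\epsilon$. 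Applying (2) with $x=x_{n_i}$, $y=x_{m_i}$ and letting $i\to\infty$, continuity of $\Phi$ gives $\Phi(\epsilon)\le k\,\Phi(\epsilon)$, contradicting $\Phi(\epsilon)>0$. Hence $\{x_n\}$ is Cauchy and, by completeness, $x_n\to b$ for some $b\in X$.

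Finally, I would identify $b$ as the unique fixed point. Plugging $x=x_n$, $y=b$ into (2) and computing (3), the four entries tend to $0$, $0$, $d(b,Sb)$, $\tfrac12 d(b,Sb)$, so $m(x_n,b)\to d(b,Sb)$, while $d(x_{n+1},Sb)\to d(b,Sb)$; passing to the limit gives $\Phi(d(b,Sb))\le k\,\Phi(d(b,Sb))$, whence $Sb=b$. For uniqueness, if $Sb=b$ and $Sb'=b'$ then (3) collapses to $m(b,b')=d(b,b')$, so (2) yields $\Phi(d(b,b'))\le k\,\Phi(d(b,b'))$ and $b=b'$. Since the Picard sequence from any starting point converges to a fixed point and the fixed point is unique, $\lim_{n\to\infty}S^n x=b$ for every $x\in X$.
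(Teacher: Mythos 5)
Your proposal is correct, but it does not follow the paper's route: the paper never proves Theorem 1.2 directly; it derives it (Remark 2.2) as the special case $Tx=x$ of its Theorem 2.1, and the proof of that theorem is organized differently from yours in the two decisive steps. You and the paper agree on the first step, where both show $m(x_{n-1},x_n)=d(x_{n-1},x_n)$ and iterate to get $d(x_n,x_{n+1})\to 0$. For Cauchyness, the paper inserts a separate boundedness lemma (its Step 2: if the orbit were unbounded, indices $n(k)$ chosen minimally with $d(x_{n(k+1)},x_{n(k)})>1$ would force $\int_0^1\phi(t)dt\le k\int_0^1\phi(t)dt$), and then in its Step 3 iterates the contractive inequality, each application lowering the smaller index by at most one, to obtain $\int_0^{d(x_m,x_n)}\phi(t)dt\le k^n\int_0^{d(x_{r(n)},x_{s(n)})}\phi(t)dt$, which tends to $0$ because boundedness keeps the last integral under control. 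You instead run the classical Rhoades contradiction with $\epsilon>0$ and minimal indices $m_i>n_i$, showing every entry of $m(x_{n_i},x_{m_i})$ tends to $\epsilon$ or $0$ and concluding $\Phi(\epsilon)\le k\Phi(\epsilon)$; this is shorter and needs no boundedness step, while the paper's route buys an explicit geometric decay of the integrals at the cost of the extra lemma and delicate index bookkeeping. The sharper difference is at the fixed-point stage: the paper's Step 4 rests on the assertion that ``from (4) $S$ is continuous'' and concludes via $S^{n(k)+1}x\to Sb$; for a Rhoades-type condition this assertion is not actually justified, since the majorant $m(x,y)$ contains the term $d(y,Sy)$, which does not become small as $x\to y$, so the contractive inequality gives no modulus of continuity for $S$. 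Your argument --- substituting $x=x_n$, $y=b$ into the contractive inequality and letting $n\to\infty$, using only continuity of $\Phi$ --- sidesteps this entirely and is the rigorous way to close that step (it is in fact Rhoades's own device). Your uniqueness argument and the deduction that $S^nx\to b$ for every $x$ coincide with what the paper dismisses as obvious.
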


In 2009 [1]  A. Beiranvand, S. Moradi, M. Omid and H. Pazandeh
introduced a new class of contractive mapping and extend the
Banach Contractive Principle.

Also in 2009 [4] A. Beiranvand and S. Moradi established the
Branciari Theorem for these classes of mappings. It is the
purpose of this paper to make an extension the Rhoades Theorem
(Theorem 1.2).

For the main theorem (Theorem 2.1) we need the following
definition.

\begin{defn}$[1]$
 Let $(X,d)$ be a metric space. A mapping $T:X \longrightarrow X$
 is said sequentially convergent if we have, for every sequence
 $\{y_{n}\}$, if $\{Ty_{n}\}$ is convergence then $\{y_{n}\}$ also is
 convergence. $T$ is said subsequentially convergent if we have, for
 every sequence $\{y_{n}\}$, if $\{Ty_{n}\}$ is convergence then
 $\{y_{n}\}$ has a convergent subsequence.
\end{defn}


\section{Main Result}

The following theorem (Theorem 2.1) is the main result of this
paper.

\begin{thm}
 Let $(X,d)$ be a complete metric space, $ k \in [0,1)$ and
$S:X \longrightarrow X$ a mapping such that, for each $x,y \in X$,
\[\int_0^{d(TSx,TSy)} \phi(t)  dt \leq k{\int _0^{m'(Tx,Ty)}
\phi(t)  dt},  \qquad\qquad\qquad (4)\] where
\[m'(Tx,Ty)=\max \{ d(Tx,Ty),d(Tx,TSx),d(Ty,TSy),\frac{d(Tx,TSy)+d(Ty,TSx)}{2} \} \qquad (5)\]
and $\phi: [0,+\infty)\longrightarrow [0,+\infty)$ is a
Lebesgue-integrable mapping which is summable (i.e., with finite
integral) on each compact subset of $[0,+\infty)$, nonnegative,
and such that

\[ for \: each \: \epsilon > 0 \qquad \int _0^\epsilon \phi(t) dt
> 0  \qquad\qquad\qquad (6)\]
and $T:X \longrightarrow X$ is a continuous, one-to-one and
subsequentially convergent. Then $S$ has a unique fixed point
$b\in X$ and, if $T$ is sequentially convergent then for each
$x\in X$, $\underset {n\rightarrow\infty} \lim S^{n} x=b$.
\end{thm}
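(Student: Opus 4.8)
The plan is to run the Picard iteration and transport it through $T$. Fix $x_0 \in X$, set $x_n = S^n x_0$ and $y_n = T x_n = T S^n x_0$, and write $\Phi(s) = \int_0^s \phi(t)\,dt$. By (6) and the hypotheses on $\phi$, the map $\Phi$ is continuous, nondecreasing, vanishes only at $0$, and is strictly positive on $(0,\infty)$; I will repeatedly use the elementary fact that if $\Phi(a_n) \to 0$ for nonnegative reals $a_n$, then $a_n \to 0$. First I would apply (4) with $x = x_n$, $y = x_{n-1}$. Since $TSx_n = y_{n+1}$ and $TSx_{n-1} = y_n$, the cross term $d(Tx_n, TSx_{n-1}) = d(y_n,y_n) = 0$, and the triangle inequality collapses (5) to $m'(Tx_n, Tx_{n-1}) = \max\{d(y_{n-1},y_n),\, d(y_n,y_{n+1})\}$. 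If this maximum were $d(y_n,y_{n+1})$, then $k<1$ would force $d(y_{n+1},y_n)=0$; in every case one obtains $\Phi(d(y_{n+1},y_n)) \le k\,\Phi(d(y_n,y_{n-1}))$, hence $\Phi(d(y_{n+1},y_n)) \le k^n \Phi(d(y_1,y_0)) \to 0$, so $d(y_{n+1},y_n) \to 0$.

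The main obstacle is to show $\{y_n\}$ is Cauchy, which I would do by contradiction. If it is not, then for some $\epsilon>0$ there are indices $m(k) < n(k)$, with $n(k)$ chosen minimal, so that $d(y_{m(k)}, y_{n(k)}) \ge \epsilon$ while $d(y_{m(k)}, y_{n(k)-1}) < \epsilon$. Combining these with the triangle inequality and the already established $d(y_{j+1},y_j)\to 0$, I would show that $d(y_{m(k)},y_{n(k)})$, the off-diagonal distances $d(y_{m(k)},y_{n(k)+1})$ and $d(y_{n(k)},y_{m(k)+1})$, and the shifted distance $d(y_{m(k)+1},y_{n(k)+1})$ all tend to $\epsilon$, while $d(y_{m(k)},y_{m(k)+1})$ and $d(y_{n(k)},y_{n(k)+1})$ tend to $0$. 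Consequently $m'(Tx_{m(k)}, Tx_{n(k)}) \to \epsilon$. Applying (4) with $x = x_{m(k)}$, $y = x_{n(k)}$ and letting $k\to\infty$, the continuity of $\Phi$ yields $\Phi(\epsilon) \le k\,\Phi(\epsilon)$, which contradicts $\Phi(\epsilon)>0$ since $k<1$. Verifying that each of the four quantities in (5) converges to the correct limit is the delicate bookkeeping I expect to be the crux.

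Since $X$ is complete, $\{y_n\}$ then converges to some $L$. Because $T$ is subsequentially convergent and $\{T x_n\}$ converges, $\{x_n\}$ has a subsequence $x_{n(j)} \to z$; continuity of $T$ gives $y_{n(j)} = T x_{n(j)} \to Tz$, so $Tz = L$. To see $z$ is fixed, I would apply (4) with $x = z$ and $y = x_{n(j)}$: as $j\to\infty$ every term of $m'(Tz, y_{n(j)})$ except $d(Tz, TSz)$ tends to $0$, so the right side tends to $k\,\Phi(d(Tz,TSz))$, while the left side tends to $\Phi(d(TSz, Tz))$; thus $\Phi(d(TSz,Tz)) \le k\,\Phi(d(TSz,Tz))$, forcing $TSz = Tz$, and injectivity of $T$ gives $Sz = z$. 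For uniqueness, feeding two fixed points $z,w$ into (4) collapses (5) to $m'(Tz,Tw) = d(Tz,Tw)$ and gives $\Phi(d(Tz,Tw)) \le k\,\Phi(d(Tz,Tw))$, whence $Tz = Tw$ and $z = w$. Finally, if $T$ is sequentially convergent, then from the convergence of $\{T S^n x\}$ the whole sequence $\{S^n x\}$ converges; since it has a subsequence tending to the unique fixed point $b$, it converges to $b$.
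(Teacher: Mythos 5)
Your proof is correct, but it takes a genuinely different route from the paper at the two key stages, and in both cases your route is at least as sound. For the Cauchy property, the paper argues in two steps: it first proves $\{TS^n x\}$ is bounded (a minimal-index contradiction argument with threshold $1$), and then iterates the contractive inequality along a chain of index pairs $(r(j),s(j))$, using boundedness to dominate $\int_0^{d(x_m,x_n)}\phi(t)\,dt$ by $k^n$ times a fixed constant. You instead run the minimal-index contradiction argument directly at the Cauchy level: choosing $m(k)<n(k)$ with $d(y_{m(k)},y_{n(k)})\ge\epsilon$ and $n(k)$ minimal, showing the four quantities in (5) tend to $\epsilon$, $0$, $0$, and $\epsilon$ respectively, and concluding $\Phi(\epsilon)\le k\,\Phi(\epsilon)$ by continuity of $\Phi$, a contradiction. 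This collapses the paper's Steps 2 and 3 into a single argument and avoids the delicate index bookkeeping of (21)--(24). The second, more substantive difference is at the fixed-point stage: the paper asserts at the outset that (4) makes $S$ continuous, and its Step 4 needs this to pass from $S^{n(k)}x\to b$ to $S^{n(k)+1}x\to Sb$; that continuity claim is never justified and is not obvious, since the term $d(Tx,TSx)$ inside $m'$ blocks the direct estimate. You avoid any continuity assumption on $S$ by applying (4) to the pair $(z,x_{n(j)})$ and letting $j\to\infty$, which is a genuine gain in rigor. One small slip in that step: the cross term $\frac{d(Tz,TSx_{n(j)})+d(Tx_{n(j)},TSz)}{2}$ tends to $\frac{1}{2}d(Tz,TSz)$, not to $0$; but since this limit is dominated by the term $d(Tz,TSz)$, your conclusion $m'(Tz,Tx_{n(j)})\to d(Tz,TSz)$, and hence the inequality $\Phi(d(TSz,Tz))\le k\,\Phi(d(Tz,TSz))$ forcing $TSz=Tz$, are unaffected.
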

\begin{proof}
From (4) $S$ is continuous and if $x \neq y$ then,
\[d(TSx,TSy) < m'(x,y). \qquad\qquad\qquad\qquad (7)\]
Let $x \in X$. Define $x_{n}=TS^{n}x$. From (5) we conclude that:
\begin{eqnarray*}
  &&m'(x_{m},x_{n})= m'(TS^{m}x,TS^{n}x)=\\
  &&\max \{d(x_{m},x_{n}),d(x_{m},x_{m+1}),d(x_{n},x_{n+1}),
  \frac{d(x_{n},x_{m+1})+d(x_{m},x_{n+1})}{2}\}. \qquad\qquad(8)
\end{eqnarray*}
We break the argument into four steps.

\textbf{STEP 1}. $\underset{n \rightarrow \infty}{\lim}
d(x_{n},x_{n+1})=0$.\\
\textbf{proof.} For each integer $n \geq 1$, from (4),
\[\int_0^{d(x_{n},d_{n+1}) \phi(t)  dt} \leq k{\int _0^{m'(x_{n-1},x_{n})}
\phi(t)  dt},  \hspace {5cm} (9)\] and by (8),
\begin{eqnarray*}
  m'(x_{n-1},x_{n})
  &=&\max \{d(x_{n-1},x_{n}),d(x_{n-1},x_{n}),d(x_{n},x_{n+1}),
  \frac{d(x_{n-1},x_{n+1})+d(x_{n},x_{n})}{2}\} \\
  &=& \max \{ d(x_{n-1},x_{n}),d(x_{n},x_{n+1}),
  \frac{d(x_{n-1},x_{n+1})}{2} \} \\
  &\leq& \max \{d(x_{n-1},x_{n}),d(x_{n},x_{n+1}),
  \frac{d(x_{n-1},x_{n})+d(x_{n},x_{n+1})}{2}\} \\
  &=& \max \{ d(x_{n-1},x_{n}),d(x_{n},x_{n+1}) \} \:\:(from \: (6)
  \:and \: (7))\\
  &=&d(x_{n-1},x_{n}).\hspace {7cm}(10)
\end{eqnarray*}
Hence, by (9) and (10) we have,
\[\int_0^{d(x_{n},d_{n+1}) \phi(t)  dt} \leq k^{n}{\int _0^{d(x,x_{1})}
\phi(t)  dt}.  \hspace {3cm} (11)\] Taking the limit of (11), as
$n \rightarrow \infty$, gives $\underset{n \rightarrow
\infty}{\lim} \int_0^{d(x_{n},d_{n+1}) \phi(t) dt}=0$. Since (6)
is holds,
\[\underset{n \rightarrow \infty}{\lim} d(x_{n},x_{n+1})=0. \qquad\qquad\qquad\qquad\qquad\qquad\qquad (12)\]

\textbf{STEP 2}. $\{x_{n}\}$ is a bounded sequence.
\textbf{proof.} If $\{x_{n}\}$ is not a bounded sequence then, we
choose a sequence $\{n(k)\}_{k=1}^{\infty}$ such that $n(1)=1$
and for each $k \in \Bbb{N}$; $n(k+1)$ is "minimal" in the sense
such that $d(x_{n(k+1)},x_{n(k)}) > 1$. Obviously $n(k) \geq k$
for all $k \in \Bbb{N}$.\\
By step 1, there exists $k_{0} \in \Bbb{N}$ such that for every
$k \geq k_{0}$; $d(x_{k+1},x_{k})< \frac{1}{2}$. So for each $k
\geq k_{0}$;
\begin{eqnarray*}
1 < d(x_{n(k+1)},x_{n(k)}) &\leq&
d(x_{n(k+1)},x_{n(k+1)-1})+d(x_{n(k+1)-1},x_{n(k)})\\
&\leq& d(x_{n(k+1)},x_{n(k+1)-1})+1. \qquad\qquad\qquad\qquad (13)
\end{eqnarray*}
By (12) and (13) we conclude that,
\[ \underset{n \rightarrow \infty}{\lim} d(x_{n(k+1)},x_{n(k)})=1. \qquad\qquad\qquad\qquad\qquad\qquad(14)\]
Also,
\begin{eqnarray*}
&&d(x_{n(k+1)},x_{n(k)})-d(x_{n(k+1)+1},x_{n(k+1)})-d(x_{n(k)+1},x_{n(k)})\\
&&\leq d(x_{n(k+1)+1},x_{n(k)+1}) \leq
d(x_{n(k+1)+1},x_{n(k+1)})\\
&&+d(x_{n(k+1)},x_{n(k)})+d(x_{n(k)},x_{n(k)+1}).
 \hspace{2.5cm} (15)
\end{eqnarray*}
Since (12), (14) and (15) are hold,
\[ \underset{n \rightarrow \infty}{\lim} d(x_{n(k+1)+1},x_{n(k)+1})=1. \qquad\qquad\qquad\qquad\qquad\qquad(16)\]
Therefore by (8),
\begin{eqnarray*}
 &&m'(x_{n(k+1)},x_{n(k)})=\max \{
 d(x_{n(k+1)},x_{n(k)}),d(x_{n(k+1)},x_{n(k+1)+1}),\\
 &&d(x_{n(k)},x_{n(k)+1}),
\frac{d(x_{n(k)},x_{n(k+1)+1})+d(x_{n(k+1)},x_{n(k)+1})}{2}\},
\qquad\qquad(17)
\end{eqnarray*}
from (12) and (14), for large enough $k$,
\begin{eqnarray*}
 &&m'(x_{n(k+1)},x_{n(k)})=\max \{
 d(x_{n(k+1)},x_{n(k)}),\frac{d(x_{n(k)},x_{n(k+1)+1})+d(x_{n(k+1)},x_{n(k)+1})}{2}\}\\
 &&=\max \{
 d(x_{n(k+1)},x_{n(k)}),\frac{[d(x_{n(k)},x_{n(k)+1})+d(x_{n(k)+1},x_{n(k+1)+1})]}{2}+\\
 &&\frac{[d(x_{n(k+1)},x_{n(k+1)+1})+d(x_{n(k+1)+1},x_{n(k)+1})]}{2}\} \underset{k \rightarrow \infty}\longrightarrow
 1.
\qquad\qquad\qquad\qquad\qquad(18)
\end{eqnarray*}
So by (16) and (18) and
\[\int_{0}^{d(x_{n(k+1)+1},x_{n(k)+1})} \phi(t)dt \leq k \int_{0}^{m'(x_{n(k+1)},x_{n(k)}))}\phi(t)dt, \qquad\qquad\qquad(19)\]
we conclude that,
\[\int_{0}^{1} \phi(t)dt \leq k \int_{0}^{1}\phi(t)dt. \qquad\qquad\qquad\qquad\qquad\qquad(20)\]
Since $k \in [0,1)$, $\int_{0}^{1} \phi(t)dt=0$ and this is
contradiction with (6).

\textbf{STEP 3}. $\{x_{n}\}$ is a Cauchy sequence.\\
\textbf{proof.} For every $m,n \in \Bbb{N} (m>n)$ by (4)
\begin{eqnarray*}
&&\int_{0}^{d(x_{m},x_{n})} \phi(t)dt \leq
\int_{0}^{m'(x_{m-1},x_{n-1})} \phi(t)dt\\
&&=k\int_{0}^{\max\{
d(x_{m-1},x_{n-1}),d(x_{m-1},x_{m}),d(x_{n-1},x_{n}),\frac{d(x_{m-1},x_{n})+d(x_{n-1},x_{m})}{2}\}}\phi(t)dt\\
&& \leq k \int_{0}^{\max\{
d(x_{m-1},x_{n-1}),d(x_{m-1},x_{m}),d(x_{n-1},x_{n}),d(x_{m-1},x_{n}),d(x_{n-1},x_{m})\}}\phi(t)dt\\
&&=k \int_{0}^{d(x_{r(1)},x_{s(1)})} \phi(t)dt,
\qquad\qquad\qquad\qquad\qquad\qquad\qquad(21)
\end{eqnarray*}
where $s(1) \geq n-1$ and $r(1) > s(1)$.\\
By the same argument, there exist $r(2),s(2) \in \Bbb{N}$ such
that $r(2) > s(2)$ and $s(2) \geq s(1)-1 \geq n-2$ such that
\[ \int_{0}^{d(x_{r(1)},x_{s(1)})} \phi(t)dt \leq k \int_{0}^{d(x_{r(2)},x_{s(2)})} \phi(t)dt. \qquad\qquad\qquad\qquad\qquad(22)\]
So, by (21) and (22),
\[ \int_{0}^{d(x_{m},x_{n})} \phi(t)dt \leq k^{2} \int_{0}^{d(x_{r(2)},x_{s(2)})} \phi(t)dt. \qquad\qquad\qquad\qquad\qquad(23)\]
By the same argument, there exist $r(n),s(n) \in \Bbb{N}$ such
that $r(n) > s(n)$ and $s(n) \geq s(n)-n \geq n-n=0$ and
\[ \int_{0}^{d(x_{m},x_{n})} \phi(t)dt \leq k^{n} \int_{0}^{d(x_{r(n)},x_{s(n)})} \phi(t)dt. \qquad\qquad\qquad\qquad\qquad(24)\]
Since $\{x_{n}\}$ is a bounded sequence and (24) is holds,
\[\underset{m,n \rightarrow \infty}{\lim} \int_{0}^{d(x_{m},x_{n})}\phi(t)dt=0. \qquad\qquad\qquad\qquad\qquad(25)\]
Hence, from (6),
\[\underset{m,n \rightarrow \infty}{\lim} d(x_{m},x_{n})=0. \qquad\qquad\qquad\qquad\qquad\qquad(26)\]
Therefore $\{x_{n}\}$ is a Cauchy sequence.

\textbf{Step 4}. $S$ has a fixed point.\\
\textbf{proof.} Since $(X,d)$ is a complete metric space and
$\{x_{n}\}$ is a Cauchy sequence there exists $a \in X$ such that
\[ \underset{n \rightarrow \infty}{\lim} TS^{n}(x)=a. \hspace{5cm}(27)\]
Since $T$ is subsequentially convergent, $\{S^{n}(x)\}$ has a
convergent subsequence alike $\{S^{n(k)}(x)\}_{k=1}^{\infty}$.
Suppose that
\[ \underset{k \rightarrow \infty}{\lim} S^{n(k)}(x)=b. \hspace{5cm}(28)\]
Since $T$ is continuous,
\[ \underset{k \rightarrow \infty}{\lim} TS^{n(k)}(x)=Tb. \hspace{4.5cm}(29)\]
From (27) and (29) we conclude that
\[Tb=a. \hspace{6.5cm}(30)\]
Since $S$ is continuous and (28) is holds,
\[ \underset{k \rightarrow \infty}{\lim} S^{n(k)+1}(x)=Sb. \hspace{4.5cm}(31)\]
So,
\[ \underset{k \rightarrow \infty}{\lim} TS^{n(k)+1}(x)=TSb. \hspace{4cm}(32)\]
Again from (27) and (30)
\[TSb=a=Tb. \hspace{5.5cm}(33)\]
Since $T$ is one-to-one, $Sb=b$. Therefore $S$ has a fixed
point.\\
Obviously, by (4) and (6) we conclude that $S$ has a unique fixed
point.
\end{proof}
\begin{rem}
Theorem 2.1 is a generalization of the Rhoades theorem (Theorem
1.2), letting $Tx=x$ for each $x\in X$ in Theorem 2.5, so \\
\begin{eqnarray*}
\int_0^{d(Sx , Sy)}\phi(t) dt
 &=& \int_0^{d(TSx , TSy)} \phi(t) dt\\
 &\leq&  k {\int_0^{m'(x,y)} \phi(t) dt }=k {\int_0^{m(x,y)} \phi(t) dt }.\qquad(34)
\end{eqnarray*}
\end{rem}

The following example shows that (4) is indeed a proper extension
of (2).
\begin{exam}
Let $X=[1,+\infty)$ endowed with the Euclidean metric. Define
$S:X \longrightarrow X$ by $Sx=4\sqrt{x}$. Obviously $S$ has a
unique fixed point $b=16$.\\
If (2) holds for some $k \in [0,1)$, then for every $x,y \in X$
such that $x \neq y$, we have
\[|Sx-Sy| < m(x,y). \hspace{5.5cm}(35)\]
But by taking $x=1$ and $y=4$ we have, $|Sx-Sy|=m(x,y)=4$ and
this is contradiction. Therefore we can not use the Rhoades
theorem (Theorem 1.2) for this example.\\
Now we define $T:X \longrightarrow X$ by $Tx=\ln(e.x)$. Obviously
$T$ is one-to-one, continuous and sequentially convergent and
\[ |TSx -TSy| =\frac{1}{2}|\ln(\frac{e.x}{e.y})|=\frac{1}{2}|Tx-Ty| \leq \frac{1}{2}m'(Tx,Ty). \qquad\qquad(36)\]
By taking $\phi \equiv 1$, all conditions of Theorem 2.1 are hold
and therefore $S$ has a unique fixed point.
\end{exam}


Email:

S-Moradi@araku.ac.ir

\end{document}